\newtheorem{theorem}{Theorem}
\newtheorem{remark}{Remark}
\newtheorem{lemma}[theorem]{Lemma}
\newtheorem{proposition}[theorem]{Proposition}
\newtheorem{corollary}[theorem]{Corollary}
\newtheorem{df}{Definition}
\newtheorem{conjecture}{Conjecture}
\newenvironment{myremark}[1][]{%
  \begin{remark}[#1]$ $\par\nobreak\ignorespaces
}{%
  \end{remark}
}
\DeclareMathOperator{\comp}{{Comp}}
\DeclareMathOperator{\incg}{Inc}
\DeclareMathOperator{\inc}{inc}
\newcommand{\NN}{{\mathbb N}}
\newcommand{\ZZ}{{\mathbb Z}}
\begin{document}

\title{The Aharoni--Korman conjecture for $N$-free posets with no infinite antichain}

%\author [M. Pouzet]{Maurice Pouzet}
%\address{Univ. Lyon, Universit\'e Claude-Bernard Lyon1, CNRS UMR 5208, Institut Camille Jordan, 43, Bd. du 11 Novembre 1918, 69622
%Villeurbanne, France et Department of Mathematics and Statistics, University of Calgary, Calgary, Alberta, Canada}
%\email{pouzet@univ-lyon1.fr }

\author[I. Zaguia]{Imed Zaguia*}\thanks{*The author is supported by the Canadian Defense Academy Research Program (CDARP)}
\address{Department of Mathematics \& Computer Science,  Royal Military College,
P.O.Box 17000, Station Forces, Kingston, Ontario, Canada K7K 7B4}
\email{zaguia@rmc.ca}

\date{\today}

\keywords{(partially) ordered set; chain; antichain; Aharoni--Korman conjecture; $N$-free; cograph; chain complete; series-parallel}
\subjclass[2010]{06A6, 06F15}

\begin{abstract}We give a necessary and sufficient condition for a $P_4$-free graph to be a cograph. This allows us to obtain a simple proof of the fact that finite $P_4$-free graphs are finite cographs. We also prove that $N$-free  chain complete posets and $N$-free posets with no infinite antichains are series-parallel.

As a consequence, we obtain that every $N$-free poset with no infinite antichain has a chain and a partition into antichains so that each part intersects the chain. This answers a conjecture of Aharoni and Korman (Order \textbf{9} (1992) 245--253) in this case.
\end{abstract}

\maketitle

\section{Introduction}

The graphs we consider are undirected, simple and have no loops. That is, a {\it graph} is a
pair $G:=(X, E)$, where $E$ is a subset of $[X]^2$, the set of $2$-element subsets of $X$. Elements of $V$ are the {\it vertices} of $G$ and elements of $ E$ its {\it edges}. The {\it complement} of $G$ is the graph ${\overline G}$ whose vertex set is $X$ and edge set ${\overline { E}}:=[X]^2\setminus  E$. If $A$ is a subset of $X$, the pair $G_{\restriction A}:=(A,  E\cap [A]^2)$ is the \emph{graph induced by $G$ on $A$}. Unless otherwise stated, the graphs we consider are not necessarily finite.

\begin{df}
A graph $G$ is called \emph{complement reducible}, or a cograph, if every induced
subgraph of $G$ with at least two vertices is either disconnected or the complement to a
disconnected graph.
\end{df}

\begin{comment}
 une definition equivalente dans le cas fini est etre sans P_4. Il faut faire attention car on sonsidere des graphes inifnis.
\end{comment}

Cographs were introduced in the early 1970s by Lerchs \cite{Lerchs71,Lerchs72} who
studied their structural and algorithmic aspects. Cographs independently arose in the study of empirical logic where they were called hereditary dacey graphs (see \cite{foulis,sumner74}).

\begin{df}  The path $P_n$ is the graph whose vertex set is $\{1,\cdots,n\}$, for a positive integer $n$, and edge set $E=\{\{i,j\} : |i-j|=1\}$. A graph is $P_n$-free if it has no $P_n$ as an induced subgraph.
\end{df}

\begin{myremark}
  \begin{itemize}
  \item A graph $G$ is $P_4$-free if and only if its complement $\overline{G}$ is $P_4$-free.
  \item The class of $P_4$-free graphs is hereditary, that is, if $G$ is a $P_4$-free and $H$ is an induced subgraph of $G$, then $H$ is a $P_4$-free.
\end{itemize}
\end{myremark}

The following characterization of finite cographs was obtained by several authors and can be found in \cite{Lerchs71,foulis,sumner74}. For several equivalent characterization of finite cographs see Theorems 11.3.3 and 11.3.5 from \cite{bvs}.

\begin{theorem}\label{thm:1}
 A \emph{finite} graph is a cograph if and only if it has no $P_4$ as induced subgraph.
\end{theorem}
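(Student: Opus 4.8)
The plan is to prove the two implications separately, exploiting the heredity of both classes together with the self-complementarity of $P_4$.

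For the direction ``cograph $\Rightarrow$ $P_4$-free'', I would first record that $P_4$ is self-complementary: the complement of the path $1,2,3,4$ is the path $2,4,1,3$, so $\overline{P_4}\cong P_4$. Hence $P_4$ is connected and its complement is connected as well, which means $P_4$ is neither disconnected nor the complement of a disconnected graph. Taking the whole vertex set (which has $4\ge 2$ vertices) in the definition shows that $P_4$ is \emph{not} a cograph. Since the class of cographs is clearly hereditary (an induced subgraph of an induced subgraph is an induced subgraph, so any induced subgraph of a cograph is again a cograph), no cograph can contain an induced $P_4$. Thus every cograph is $P_4$-free.

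For the converse the key observation is a reduction: by the definition of a cograph and the fact that $P_4$-freeness is hereditary, it suffices to prove the single statement that \emph{every finite $P_4$-free graph $H$ with at least two vertices is disconnected or has disconnected complement $\overline{H}$}. Indeed, if $G$ is finite and $P_4$-free, then for each $A\subseteq X$ with $|A|\ge 2$ the induced subgraph $G_{\restriction A}$ is again $P_4$-free, and applying the statement to $G_{\restriction A}$ yields precisely the dichotomy (disconnected, or complement of a disconnected graph) required in the definition of a cograph.

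I would prove this statement by induction on $n=|V(H)|$, the case $n=2$ being immediate. For the inductive step, suppose toward a contradiction that $H$ and $\overline{H}$ are both connected. Fix any vertex $v$ and apply the induction hypothesis to $H-v$; since $\overline{H-v}=\overline{H}-v$ and the hypotheses on $H$ are symmetric under complementation, I may assume $H-v$ is disconnected, with components $C_1,\dots,C_k$ where $k\ge 2$. Connectivity of $H$ forces $v$ to have a neighbour in each $C_i$, for otherwise some $C_i$ would be a union of connected components of $H$ itself. The crucial step — and the one place where $P_4$-freeness really does the work — is to show that $v$ is then adjacent to \emph{all} of each $C_i$: if $v$ had both a neighbour and a non-neighbour in some $C_i$, then, $C_i$ being connected, there is an edge $\{x,x'\}$ of $C_i$ with $v\sim x$ and $v\not\sim x'$; choosing a neighbour $w$ of $v$ in a different component $C_m$ (possible since $k\ge 2$), the four vertices $x',x,v,w$ induce a $P_4$, a contradiction. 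Hence $v$ is adjacent to every other vertex, so $v$ is isolated in $\overline{H}$ and $\overline{H}$ is disconnected, contradicting the assumption that $\overline{H}$ is connected. The main obstacle is exactly this inductive step: correctly exploiting the complementation symmetry to normalize to the case where $H-v$ is disconnected, and then exhibiting the forbidden $P_4$; once those are in place, both implications, and hence the theorem, follow.
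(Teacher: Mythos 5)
Your proof is correct, but it follows a genuinely different route from the paper. The paper never argues by induction: it deduces Theorem \ref{thm:1} from Corollary \ref{cor:1}, which in turn rests on Theorem \ref{thm:2} and the module structure of the connected components of $inc(x)$ (Lemmas \ref{lem:1} and \ref{lem:2}). Concretely, for a connected $P_4$-free graph in which some $inc(x)$ has finitely many components, the paper picks the component whose associated set $N_1(x)$ is inclusion-minimal and shows directly that $N_x\neq\emptyset$, which by Theorem \ref{thm:2} forces $\overline{G}$ to be disconnected; heredity of $P_4$-freeness then yields the cograph property. Your argument is the classical Seinsche-style induction on $|V(H)|$: delete a vertex $v$, use the self-complementarity of $P_4$ to normalize to the case where $H-v$ is disconnected, and show that $v$ must dominate $H-v$, so that $\overline{H}$ is disconnected. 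All the steps check out, including the reduction to the single dichotomy statement, the existence of an edge $\{x,x'\}$ straddling the neighbourhood of $v$ inside a component, and the verification that $x',x,v,w$ induce a $P_4$. What your approach buys is a short, self-contained proof of the finite theorem; what it gives up is any handle on the infinite case, since both the induction and the vertex-deletion normalization are intrinsically finite. The paper's approach is slightly heavier for the finite case but isolates exactly where finiteness enters (finitely many components of $inc(x)$), which is what permits the generalization to infinite graphs stated in Theorem \ref{thm:2} and Corollary \ref{cor:1}.
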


The implication $\Rightarrow$ is true even if the graph is assumed to be infinite. On the other hand the implication $\Leftarrow$ is false in the infinite case. Consider the graph $\mathcal{H}=(\ZZ,E)$ whose vertex set is the set of integers $\ZZ$ and edge set $E=\{\{i,j\} : i \mbox{ is even and } i<j\}$. It follows from the definition of $\mathcal{H}$ that the even integers form an infinite clique and the odd integers form an infinite independent set. We now prove that $\mathcal{H}$ does not have $P_4$ as an induced subgraph, yet $\mathcal{H}$ and $\overline{\mathcal{H}}$ are connected. Suppose for a contradiction that there are integers $i,j,k,l$ such that $\mathcal{H}_{\restriction \{i,j,k,l\}}$ is $P_4$ and assume without loss of generality that $\{i,j\},\{j,k\},\{k,l\}$ are its edges. Then $i$ cannot be even because otherwise $k$ and $l$ must be odd and hence cannot form an edge contradicting our assumption. By symmetry $l$ cannot be even. It follows then that $i$ and $l$ are odd and $j$ and $k$ are even. Since $\{i,j\},\{k,l\}$ are edges we infer that $j<i$ and $k<l$. Since $\{j,l\}$ is not an edge and $j$ is even and $l$ is odd it follows that $l<j$. Hence we have $k<l<j<i$ and therefore $\{k,i\}$ is an edge contradicting our assumption. This proves that $\mathcal{H}$ does not have $P_4$ as an induced subgraph. One can easily see that $\mathcal{H}$ and its complement are connected hence $\mathcal{H}$ is not a cograph.\\

In this paper we generalize Theorem \ref{thm:1} to infinite graphs and obtain in route a simple proof of it. Let $G=(X,E)$ be a graph. For $x\in X$ we denote by $N(x):=\{y\in X : \{x,y\}\in E\}$ and set $\inc(x):=X\setminus (N(x)\cup \{x\})$. Define
\[N_x:=\{y\in N(x) : y \mbox{ is adjacent to all elements of } \inc(x)\}.\]

\begin{theorem}\label{thm:2}Let $G=(X,E)$ be a connected $P_4$-free graph of any cardinality. There exists $x\in X$ such that $N_x\neq \emptyset$ if and only if ${\overline G}$ is not connected.
\end{theorem}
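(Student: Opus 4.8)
The plan is to prove the two implications separately and to observe that the harder direction rests on a single structural fact about $P_4$-free graphs.

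First I would dispatch the direction assuming $\overline{G}$ is disconnected. Write $X = A \cup B$ with $A,B$ nonempty, disjoint, and no edge of $\overline{G}$ joining $A$ to $B$; equivalently, in $G$ every vertex of $A$ is adjacent to every vertex of $B$. Fixing any $x \in A$, all non-neighbours of $x$ lie in $A$, so $inc(x) \subseteq A$. Any $y \in B$ is then a neighbour of $x$ adjacent to every element of $inc(x)$, whence $y \in N_x$ and $N_x \neq \emptyset$. Note that this implication uses neither $P_4$-freeness nor connectedness of $G$.

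For the converse I would first isolate the following lemma: in any $P_4$-free graph, two vertices lying in the same connected component are at distance at most $2$. Indeed, a shortest path between them is induced, and were its length at least $3$ its first four vertices would induce a $P_4$. By the Remark above, $\overline{G}$ is itself $P_4$-free, so the lemma applies to $\overline{G}$. Now suppose $y \in N_x$ and, towards a contradiction, that $x$ and $y$ lie in the same component of $\overline{G}$. Since $y \in N(x)$ forces $y \neq x$, their $\overline{G}$-distance is $1$ or $2$. It cannot be $1$, as $y$ is adjacent to $x$ in $G$. If it is $2$, pick a common $\overline{G}$-neighbour $v$; then $v$ is non-adjacent to $x$ in $G$ with $v \neq x$, so $v \in inc(x)$, yet $v$ is also non-adjacent to $y$ in $G$, contradicting the defining property of $N_x$ (which forces $y$ adjacent to every element of $inc(x)$). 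Hence $x$ and $y$ sit in different components of $\overline{G}$, so $\overline{G}$ is disconnected.

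The main obstacle is the converse, and specifically recognizing that the natural arena is the complement graph $\overline{G}$ together with the bounded-distance lemma. Once one sees that distances in a $P_4$-free graph collapse to at most $2$ within a component, the definition of $N_x$ is exactly tailored to forbid a length-$2$ connection between $x$ and $y$ in $\overline{G}$, and the argument closes immediately.
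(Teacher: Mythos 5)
Your proof is correct, but the harder direction ($N_x\neq\emptyset$ implies $\overline{G}$ disconnected) takes a genuinely different route from the paper. The paper argues directly in $G$: it sets $N'_x:=X\setminus N_x$ and shows, by building an explicit $P_4$ on $\{y_2,x,y_1,c\}$ from a hypothetical non-edge, that every vertex of $N_x$ is adjacent to every vertex of $N'_x$; since $x\in N'_x$ and $N_x\neq\emptyset$, the pair $\{N_x,N'_x\}$ is an explicit join-partition witnessing the disconnection of $\overline{G}$. You instead pass to $\overline{G}$, invoke the standard fact that connected $P_4$-free graphs have diameter at most $2$ (via induced shortest paths), and observe that the definition of $N_x$ rules out a common $\overline{G}$-neighbour of $x$ and $y$, so $x$ and $y$ lie in distinct components of $\overline{G}$. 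Both arguments are complete; your distance step is airtight, including the exclusion of distance $1$. What the paper's version buys is structural information: it identifies $N_x$ itself as one block of the co-connected decomposition, which is exactly what is reused in the proof of Corollary~\ref{cor:1} and mirrors the role of $C_x$ in Lemma~\ref{lem:4}. What your version buys is economy and modularity: $P_4$-freeness is quarantined inside a single well-known diameter lemma, and the rest is bookkeeping; you also correctly note in passing that connectedness of $G$ is not actually needed (indeed the paper's proof does not use it either).
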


As a consequence we obtain the following result which clearly implies Theorem \ref{thm:1}.

\begin{corollary}\label{cor:1}Let $G=(X,E)$ be a connected $P_4$-free graph of any cardinality. If there exists $x\in X$ such that $G_{\restriction \inc(x)}$ has finitely many connected components, then ${\overline G}$ is not connected.
\end{corollary}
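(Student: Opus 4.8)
The plan is to exhibit, for the very vertex $x$ supplied by the hypothesis, a nonempty set $N_x$, and then quote Theorem~\ref{thm:2} to conclude that $\overline{G}$ is not connected. If $inc(x)=\emptyset$ then $x$ is adjacent to all other vertices, so $N_x=N(x)\neq\emptyset$ (assuming, as we may, that $G$ has at least two vertices), and we are done. Otherwise I would write the finitely many components of $G_{\restriction inc(x)}$ as $C_1,\dots,C_k$ and put $A_i:=\{a\in N(x): a \text{ is adjacent to every vertex of } C_i\}$. Directly from the definitions, $N_x=\bigcap_{i=1}^{k}A_i$, since a neighbour of $x$ lies in $N_x$ exactly when it is adjacent to all of $inc(x)=\bigcup_i C_i$. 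Moreover each $A_i$ is nonempty: as $C_i$ is a component of $inc(x)$, its vertices are adjacent neither to $x$ nor to any other component, so connectedness of $G$ forces an edge from $C_i$ into $N(x)$. Thus the whole corollary reduces to showing that a finite intersection of nonempty sets, $\bigcap_{i=1}^k A_i$, is nonempty.

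The argument rests on two applications of $P_4$-freeness. First, a short lemma: each $a\in N(x)$ is adjacent to all or to none of a given component $C_i$; for if $a$ were adjacent to $b'\in C_i$ but not to an adjacent $c'\in C_i$, then $x-a-b'-c'$ would be an induced $P_4$. (This also legitimizes the connectedness argument above, since the connecting edge puts its $N(x)$-endpoint into $A_i$.) Second, and this is the heart of the matter, I claim the sets $A_1,\dots,A_k$ are \emph{totally ordered by inclusion}. Suppose not: pick $i\neq j$, $a\in A_i\setminus A_j$, $b\in A_j\setminus A_i$, and $c\in C_i$, $d\in C_j$. By the all-or-none lemma $a\sim c$, $a\not\sim d$, $b\sim d$, $b\not\sim c$; also $c\not\sim d$ (distinct components) and $x$ is adjacent to both $a,b$ but to neither $c,d$. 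If $a\not\sim b$ then $c-a-x-b$ is an induced $P_4$; if $a\sim b$ then $c-a-b-d$ is an induced $P_4$. Either way we contradict $P_4$-freeness. This dichotomy on whether $a\sim b$ is the step I expect to be the main obstacle, since both sub-configurations must be excluded at once.

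Finally I would invoke finiteness decisively. A finite family of sets that is totally ordered by inclusion has a least member $A_{i_0}$, and for a chain this least member equals the intersection of the whole family; hence $N_x=\bigcap_{i=1}^{k}A_i=A_{i_0}\neq\emptyset$. Theorem~\ref{thm:2} then gives that $\overline{G}$ is not connected. It is instructive to see why finiteness cannot be dropped: in the graph $\mathcal{H}$ the analogous sets $A_i$ form an infinite strictly descending chain with empty intersection, which is exactly why $N_x=\emptyset$ for every $x$ and $\overline{\mathcal{H}}$ stays connected. In other words, having only finitely many components is precisely what guarantees the chain a least element, and therefore a nonempty intersection.
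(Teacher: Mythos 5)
Your proof is correct and follows essentially the same route as the paper: reduce the corollary to showing $N_x\neq\emptyset$ and invoke Theorem~\ref{thm:2}, by comparing the sets of neighbours of $x$ that are fully adjacent to each of the finitely many components of $inc(x)$ and extracting an extremal one. Your explicit verification that these sets $A_1,\dots,A_k$ are nonempty (via connectedness of $G$) and totally ordered by inclusion actually tightens two points the paper leaves implicit when it selects the component with the ``smallest'' $N_1(x)$, so the argument stands as written.
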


%Indeed, we give a necessary and sufficient condition for a $P_4$-free graph to be a cograph (see Claim 3 of Section \ref{general}).\\
The proof of these two results will be given in Section \ref{general}.\\

There is an interplay between $P_4$-free graphs and a class of partially ordered sets. Throughout, $P :=(X, \leq)$ denotes a partially ordered set, poset for short. For $x,y\in V$ we say that $x$ and $y$ are \emph{comparable} if $x\leq y$ or $y\leq x$; otherwise we say that $x$ and $y$
are \emph{incomparable}. A set of pairwise incomparable elements is called an \emph{antichain}.
A \emph{chain} is a totally ordered set. The \emph{comparability graph}, respectively the \emph{incomparability graph}, of $P$ is the graph, denoted by $\comp(P)$, respectively $\incg(P)$, with vertex set $X$ and edges the pairs $\{u,v\}$ of comparable distinct vertices (that is, either $u< v$ or $v<u$) respectively incomparable vertices.

A 4-tuple $(a, b, c, d)$ of distinct elements of $X$ is an $N$ in $P$ if $b>a$ and $b>c$, $d>c$ and if these are the only strict comparabilities between the elements $a,b,c,d$. The poset $P$ is $N$-{\it free} if it does  not contain an $N$. We mention the following result that connects $P_4$-free graphs to $N$-free posets.

\begin{proposition}\label{prop:1} A graph is $P_4$-free if and only if it is the comparability graph of an $N$-{\it free} poset.
\end{proposition}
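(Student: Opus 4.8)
My plan is to route both implications through a single local dictionary: for any four distinct elements $a,b,c,d$ of the ground set, the set $\{a,b,c,d\}$ induces an $N$ in a poset $P$ if and only if it induces a $P_4$ in $Comp(P)$. One half is immediate, since by definition the comparabilities of an $N$ on $(a,b,c,d)$ are exactly $\{a,b\},\{b,c\},\{c,d\}$, and these form a $P_4$. For the converse half, suppose $\{a,b,c,d\}$ induces a $P_4$ in $Comp(P)$ with edges $\{a,b\},\{b,c\},\{c,d\}$; then $a,c$, $a,d$ and $b,d$ are pairwise incomparable in $P$. Looking at the triple $a,b,c$, incomparability of $a$ and $c$ rules out $a<b<c$ and $c<b<a$ by transitivity, so $b$ lies either above both $a$ and $c$ or below both; the triple $b,c,d$ constrains $c$ in the same way. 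A short inspection of the four combinations discards two as self-contradictory (they force $b<c<b$) and exhibits an $N$ on $\{a,b,c,d\}$ in each of the two dual survivors. This establishes the dictionary.

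Granting the dictionary, the implication $\Leftarrow$ follows at once: if $P$ is $N$-free then $Comp(P)$ has no induced $P_4$. For the implication $\Rightarrow$ I would split the work into (i) exhibiting a transitive orientation of $G$, that is, a poset $P$ with $Comp(P)=G$, and (ii) checking that $P$ is $N$-free. Step (ii) costs nothing: an $N$ in $P$ would, by the dictionary, produce an induced $P_4$ in $Comp(P)=G$, which is excluded. So the entire content of the forward direction is step (i): every $P_4$-free graph is a comparability graph.

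For finite $G$ I would obtain (i) from Theorem~\ref{thm:1}: a finite $P_4$-free graph is a cograph, and a cograph admits a transitive orientation built by induction along its cotree (orient every edge crossing a given join node in one fixed direction and recurse inside the two factors), which in fact returns an $N$-free $P$ outright. The genuine obstacle is the infinite case, where $G$ and $\overline G$ may both be connected and no cotree decomposition is available.

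I would overcome this by compactness. Introduce one Boolean variable per edge recording its orientation. Transitivity can be written as a family of constraints, one for each ordered triple $u,v,w$ of vertices, each constraint mentioning only the (at most three) edges among $\{u,v,w\}$; thus every constraint is finitary. Any finite subfamily involves only finitely many vertices, and the subgraph they induce is again $P_4$-free, hence by the finite case a comparability graph, so that finite subfamily is simultaneously satisfiable. By the compactness theorem for propositional logic (in the spirit of the De Bruijn--Erd\H{o}s theorem) the whole family is satisfiable, and a satisfying assignment is precisely a transitive orientation of $G$; irreflexivity and antisymmetry are automatic because $G$ is simple, so we get a genuine poset $P$ with $Comp(P)=G$. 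This completes the plan, the compactness reduction being the step I expect to require the most care.
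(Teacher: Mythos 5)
Your proposal is correct, but it takes a genuinely different route from the paper, which does not prove the proposition from scratch: it merely invokes the Ghouil\`a-Houri/Gilmore--Hoffman characterisation of comparability graphs to assert that $P_4$-free graphs are comparability graphs, and otherwise points to Jung's direct proof. Your four-point dictionary (a quadruple induces an $N$ in $P$ if and only if it induces a $P_4$ in $Comp(P)$) is sound --- the case analysis on the two triples does collapse to the two dual $N$ configurations --- and it correctly reduces everything to showing that a $P_4$-free graph admits a transitive orientation, with $N$-freeness of the resulting poset coming for free. Where the paper outsources that step to the literature, you give a two-stage argument: the finite case via Theorem~\ref{thm:1} and a cotree-guided orientation (which in fact yields a series-parallel, hence $N$-free, order outright), and the infinite case by propositional compactness, using that any finite set of transitivity constraints lives on a finite induced subgraph, which is again $P_4$-free and hence orientable. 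This is a legitimate, essentially self-contained alternative, and it quietly addresses a point the paper glosses over: the cited characterisations of comparability graphs are usually stated for finite graphs, so some compactness (or an explicitly infinite version as in Kelly's survey) is needed anyway to reach arbitrary cardinality. The costs are mild: the compactness step uses the Boolean prime ideal theorem, and you lean on Theorem~\ref{thm:1}, which is harmless since the paper's proof of that theorem does not depend on Proposition~\ref{prop:1}.
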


Indeed, using the characterisation of comparability graphs \cite{houri,gilmore} one can easily see that $P_4$-free graphs are comparability graphs. Jung \cite{jung} gave a direct proof of Proposition \ref{prop:1} without using the characterisation of comparability graphs from \cite{houri,gilmore}.

Finite $N$-free posets can be constructed from the 1-element poset using disjoint and linear sum. Let $I$ be a poset such that $|I|\geq 2$ and let $\{P_{i}\}_{i\in I}$ be a family of pairwise disjoint nonempty posets that are all disjoint from $I$. The \emph{lexicographical sum} $\displaystyle \sum_{i\in I} P_{i}$ is the poset defined on $\displaystyle \bigcup_{i\in I} P_{i}$ by $x\leq y$ if and only if
\begin{enumerate}[(a)]
\item There exists $i\in I$ such that $x,y\in P_{i}$ and $x\leq y$ in $P_{i}$; or
\item There are distinct elements $i,j\in I$ such that $i<j$ in $I$ and $x\in P_{i}$ and $y\in P_{j}$.
\end{enumerate}

%The posets $P_{i}$ are called the \emph{components} of the lexicographical sum and the poset $I$ is the \emph{index set}.

If $I$ is a totally ordered set, then $\displaystyle \sum_{i\in I}
P_{i}$ is called a \emph{linear sum}. On the other hand, if $I$ is unordered (antichain), then $\displaystyle \sum_{i\in I} P_{i}$ is called a \emph{disjoint sum}. A poset is \emph{connected} if it is not a disjoint sum and it is \emph{co-connected} if it is not a linear sum.

Let $\mathcal{SP}$ denote the set of finite posets containing the 1-element poset and which is closed under linear sum and disjoint sum. The set $\mathcal{SP}$ is called the set of series-parallel posets.

\begin{theorem}\label{thm:finitesp}Let $P=(X,\leq)$ be a \emph{finite} poset. Then $P$ is $N$-free if and only if $P\in \mathcal{SP}$.
\end{theorem}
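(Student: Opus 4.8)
The plan is to prove both implications by induction on $|X|$, translating between order-theoretic and graph-theoretic language via Proposition \ref{prop:1} and then invoking Theorem \ref{thm:1}. The one-element poset is the common base case: it lies in $\mathcal{SP}$ by definition and trivially contains no $N$.

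For the easy direction ($P\in\mathcal{SP}\Rightarrow P$ is $N$-free) I would induct on the construction of $P$. Suppose $P=\sum_{i\in I}P_i$ is a disjoint or a linear sum of $N$-free posets and, for contradiction, that $P$ contains an $N$ on four elements $\{a,b,c,d\}$. The comparability graph of such an $N$ is a $P_4$, hence connected, while its incomparability graph is the path $c-a-d-b$, hence also connected. In a disjoint sum $Comp(P)$ carries no edge between distinct parts, so connectedness of the $P_4$ forces the four elements into a single $P_i$; in a linear sum $Inc(P)=\overline{Comp(P)}$ carries no edge between distinct parts, so connectedness of the incomparability path forces them into a single $P_i$. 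Either way we contradict the $N$-freeness of $P_i$.

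For the main direction ($P$ finite and $N$-free $\Rightarrow P\in\mathcal{SP}$) I would again induct on $|X|$, taking $|X|\ge 2$. If $P$ is disconnected it is the disjoint sum of its connected components; each is a proper induced subposet, hence $N$-free and, by induction, in $\mathcal{SP}$, so $P\in\mathcal{SP}$. If $P$ is connected then $Comp(P)$ is a finite connected graph (connected precisely because $P$ is not a disjoint sum), which is $P_4$-free by Proposition \ref{prop:1}, hence a cograph by Theorem \ref{thm:1}; since a connected cograph on at least two vertices must have disconnected complement, $Inc(P)=\overline{Comp(P)}$ is disconnected. This yields a partition of $X$ into the (at least two) connected components of $Inc(P)$, any two of which are completely comparable in $P$.

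The crux is to upgrade this purely graph-theoretic splitting into an honest linear-sum decomposition, and this is the step I expect to be the main obstacle. The point is to show that the comparabilities between distinct $Inc$-components all point the same way. I would first observe that if $c,c'$ lie in one component (so they are joined by a path of incomparabilities) and $d$ lies in another, then $d$ cannot satisfy $c<d<c'$ or $c'<d<c$, since transitivity would then make $c,c'$ comparable; propagating along incomparability paths shows each single element of one component lies either below all of another component or above all of it, and a second application shows an entire component lies uniformly below or uniformly above another. Transitivity of $<$ then makes this blockwise relation a total order on the components, so $P$ is the linear sum of its $Inc$-components indexed by this chain. Taking a minimal component $C$ gives $C<X\setminus C$, exhibiting $P$ as a nontrivial linear sum $C\oplus(X\setminus C)$ of two proper induced $N$-free subposets; by induction both lie in $\mathcal{SP}$, whence $P\in\mathcal{SP}$. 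I expect the bookkeeping in this uniform-direction argument to be the only delicate part; the reductions and the two base cases are routine.
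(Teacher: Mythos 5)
Your argument is correct, but be aware that the paper itself never proves Theorem \ref{thm:finitesp}: it is quoted as a known result, and the decomposition theorem actually proved is its chain-complete generalization, Theorem \ref{thm:completesp}, via Lemmas \ref{lem:3} and \ref{lem:4}. Your route is genuinely different from that machinery. You reduce the connected case to Theorem \ref{thm:1} (through the easy half of Proposition \ref{prop:1}) to conclude that $Inc(P)$ is disconnected, and then carry out the ``uniform direction'' bookkeeping by hand, totally ordering the $Inc$-components blockwise. That step is essentially right, but state its opening claim only for an \emph{incomparable} pair $c,c'$: two vertices in the same $Inc$-component may well be comparable, in which case $c<d<c'$ yields no contradiction; the absurdity appears only at an incomparable pair straddling $\{c\in C: c<d\}$ and $\{c\in C: d<c\}$, which is exactly what your connectedness-propagation supplies, so this is a matter of phrasing rather than a gap. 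By contrast, the paper packages the same splitting in Lemma \ref{lem:4} ($P$ is a linear sum iff $C_x\neq\emptyset$ for some $x$), and the finite theorem drops out of Theorem \ref{thm:completesp} together with induction, since finite posets are trivially chain complete. Your proof buys a self-contained derivation from the cograph theorem; the paper's approach buys the infinite generalization. Your easy direction (sums of $N$-free posets are $N$-free because both the comparability and the incomparability graph of an $N$ are connected paths) is clean and correct.
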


This theorem is false in the infinite case. Indeed, a poset whose comparability graph is $\mathcal{H}$ is $N$-free and yet it is connected and co-connected. On the other, hand we can replace the finiteness condition in Theorem \ref{thm:finitesp} by chain completeness or by the finiteness of the antichains to obtain a similar result, thus generalizing Theorem \ref{thm:finitesp}. A poset $P$ is \emph{chain complete}  if  every nonempty chain has a supremum and an infimum.

\begin{theorem}\label{thm:completesp}
Every chain complete $N$-free poset is a linear sum or a disjoint sum of chain complete $N$-free posets.
\end{theorem}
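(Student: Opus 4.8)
The plan is to reduce the statement to the dichotomy ``$Comp(P)$ is disconnected, or $Inc(P)=\overline{Comp(P)}$ is disconnected'' and then to read off a disjoint sum in the first case and a linear sum in the second. We may assume $|X|\ge 2$. Recall that $P$ is a disjoint sum exactly when $Comp(P)$ is disconnected, and that $P$ is a linear sum when $Inc(P)$ is disconnected (two incomparability classes of a poset being comparable across each other, $P$ then splits as an ordinal sum). If $Comp(P)$ is disconnected, then $P$ is the disjoint sum of the subposets induced on the connected components of $Comp(P)$; each such summand is $N$-free by heredity, and it is chain complete because the supremum (infimum) of a chain lying in one component is comparable to that chain and hence stays in the same component. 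From now on I would therefore assume $Comp(P)$ is connected and aim to prove that $Inc(P)$ is disconnected.

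By Proposition~\ref{prop:1} the graph $G:=Comp(P)$ is a connected $P_4$-free graph, so by Theorem~\ref{thm:2} it suffices to exhibit an $x\in X$ with $N_x\neq\emptyset$. Unwinding the definitions, for comparable $x,y$ one has $y\in N_x$ precisely when no element is incomparable to both, i.e. $inc(x)\cap inc(y)=\emptyset$. Chain completeness enters through extremal elements: since every nonempty chain has an upper bound (its supremum), Zorn's lemma furnishes maximal elements, and dually minimal ones, with every element lying above a minimal and below a maximal element. The natural candidate is to take a minimal element $a$ and a maximal element $m$ with $a<m$ (the infimum and supremum of a maximal chain): if $inc(a)\cap inc(m)=\emptyset$ then $m\in N_a$ and we are done. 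Moreover, if $inc(x)=\emptyset$ for some $x$ (e.g. a greatest or least element), then Corollary~\ref{cor:1} already yields the conclusion.

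The heart of the argument is to handle a witness $z$ incomparable to both $a$ and $m$. Here I would exploit $N$-freeness: if $a$ is minimal, $m$ is maximal, $a<m$, and $z\parallel a$, $z\parallel m$, then any minimal $a_z\le z$ must satisfy $a_z\parallel m$ and any maximal $m_z\ge z$ must satisfy $m_z\parallel a$, for otherwise the quadruples $(a,m,a_z,z)$ or $(z,m_z,a,m)$ would be $N$'s. Thus a single obstruction propagates to a ``parallel'' comparable pair $a_z<m_z$ with $a_z\parallel m$ and $m_z\parallel a$. The main obstacle is to show that chain completeness prevents this obstruction from persisting: iterating the construction ought to manufacture a strictly monotone chain with no upper bound (hence no supremum), contradicting chain completeness. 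This is exactly the mechanism by which the witness $\mathcal H$ fails to be chain complete along its doubly infinite chain of even integers. Converting the local $N$-free obstruction into a genuinely unbounded chain, and locating precisely where the supremum goes missing, is the step I expect to require the most care.

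Finally one must check that the summands obtained are themselves chain complete $N$-free posets. For the disjoint sum this was already noted. For the linear sum it is more delicate: the finest decomposition, into the connected components of $Inc(P)$ arranged along their induced chain, need \emph{not} have chain complete summands, since a component may contain a chain whose supremum lives in a strictly higher component. I would therefore define the two blocks of the linear sum by cutting at a supremum or infimum supplied by chain completeness, so that each block is a chain complete down-set, respectively up-set. Verifying that such a cut always exists, and that in the presence of a one-sided extremum (a peelable greatest or least element) both blocks can simultaneously be kept chain complete, is the second point that needs genuine care.
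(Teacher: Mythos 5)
Your reduction to the case where $Comp(P)$ is connected, and your choice of candidates --- the infimum $a$ and supremum $m$ of a maximal chain, which are respectively minimal and maximal in $P$ --- match the paper's setup (the paper works with an arbitrary $x$ on the chain and shows that its supremum $\mathfrak{s}$ or infimum $\mathfrak{i}$ is comparable to all of $inc(x)$; your version with $x=a$ is an equivalent target). The difficulty is that the entire content of the proof lies in the step you defer: ruling out a witness $z$ incomparable to both $a$ and $m$. Your proposed mechanism --- pass to a parallel pair $a_z\le z\le m_z$ with $a_z$ incomparable to $m$ and $m_z$ incomparable to $a$, then ``iterate'' until a chain with no supremum appears --- cannot work as described, for a concrete reason: the claim you are trying to prove (that a minimal $a$ and a maximal $m$ with $a<m$ satisfy $inc(a)\cap inc(m)=\emptyset$) is \emph{false} without connectedness (take the disjoint sum of two $2$-element chains: each endpoint of one chain is incomparable to both endpoints of the other), and your sketch of the iteration never invokes connectedness. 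After one step you simply have two maximal chains with pairwise incomparable endpoints, a configuration fully consistent with chain completeness and $N$-freeness alone, and nothing forces the iteration to produce strictly larger elements. In the paper, chain completeness is used exactly once, to produce $\mathfrak{s}$ and $\mathfrak{i}$; the contradiction is then obtained by a finite combinatorial argument in which connectedness (via Lemma \ref{lem:3}: the components of $inc(x)$ are modules) supplies auxiliary elements $z<x$ comparable to $y_{\mathfrak{s}}$ and $z'>x$ comparable to $y_{\mathfrak{i}}$, and a case analysis through half a dozen forbidden configurations ends with the $N$ given by $(y_{\mathfrak{i}},\mathfrak{s},\mathfrak{i},y_{\mathfrak{s}})$. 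None of that argument is present in your proposal, so the core of the proof is missing.

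A secondary remark: you are right to flag that the summands of the resulting linear sum need not be chain complete, but your fix (``cut at a supremum or infimum so that each block is a chain complete down-set, respectively up-set'') does not work in general. For $P=[0,1]$ with the usual order, every two-block linear decomposition is of the form $[0,c)\,,\,[c,1]$ or $[0,c]\,,\,(c,1]$, and in each case one block contains a chain with no extremum inside that block; one must instead allow decompositions with many summands (here, the linear sum of singletons). So this point also remains unresolved in your write-up --- though, to be fair, the paper is equally silent about it: Lemma \ref{lem:4} produces the blocks $C^{-}_x$, $C'_x\cup inc(x)$, $C^{+}_x$ without verifying their chain completeness.
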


The following is a consequence of Corollary \ref{cor:1}.

\begin{theorem}\label{thm:finite-antichain}
Every $N$-free poset without infinite antichains is a linear sum or a disjoint sum of $N$-free posets without infinite antichains.
\end{theorem}

The proof of Theorems \ref{thm:completesp} and \ref{thm:finite-antichain} will be given in section \ref{section:proof-thms}.

A consequence of Theorem \ref{thm:finite-antichain} is a positive answer to a conjecture of Aharoni and Korman \cite{aharoni-korman} in the case of $N$-free posets with no infinite antichains. In 1992, Aharoni and Korman \cite{aharoni-korman} proposed the following conjecture:

\begin{conjecture}For an ordered set $P$ with no infinite antichains and any positive integer $k$, there are $k$ chains $C_1,\cdots, C_k$ and a partition of $P$ into antichains $(A_i : i \in I)$ such that each $A_i$ intersects $\min(|A_i|,k )$ chains $C_j$.
\end{conjecture}

Aharoni and Korman used K\"{o}nig duality theorem (\cite{konig1950} for the finite case and Aharoni \cite{aharoni84} for the infinite case) to prove  that Conjecture 1 is  true for posets of width $2$ (i.e., with no antichain of cardinality $3$). The instance $k=1$ of Conjecture 1 is known to be true for well founded and level finite posets. This follows from the Compactness Theorem of First Order Logic \cite{aharoni-korman}. Duffus and Goddard gave a constructive proof (see Theorem 2.1 \cite{duffs-goddard}). The instance $k=1$ of Conjecture  1 is also known to be true for other classes of posets with no infinite antichain, for instance for ordered sets of the form $C \times P$, where $C$ is a chain and $P$ is finite, and for ordered sets with no infinite
antichains and no infinite intervals. (see Duffus and Goddard \cite{duffs-goddard}).

%We give a positive answer in the case poset with no infinite antichains and whose comparability graph is a cograph and $k=1$.
%In general, we do not know if we can reduce the Aharoni--Korman conjecture to prime posets with no infinite antichains. The class of $N$-free posets presents a pathology in the sense that the only prime posets within this class are posets of cardinality at most two. We prove the following.

\begin{theorem}\label{thm:aharoni-korman}
Every $N$-free poset with no infinite antichains possesses a chain and a partition into antichains so that each part intersects the chain.
\end{theorem}

The proof of Theorem \ref{thm:aharoni-korman} will be given in Section \ref{sec:proof-thm:aharoni-korman}.

\section{A generalisation of Theorem \ref{thm:1}}\label{general}

Let $G=(X,E)$ be a graph. A subset $A$ of $X$ is called a \emph{module} in $G$ if for every $v \not \in A$, either $v$ is adjacent to all vertices of $A$ or $v$ is not adjacent to any vertex of $A$. Clearly, the empty set, the singletons in $X$ and the whole set $X$ are modules in $G$; they are called
\emph{trivial}. A graph is called \emph{prime} if all its modules are trivial. With this definition, graphs on a set of size at most two are prime. Also, there are no prime graphs  on a three-element set.

The graph $P_4$ is prime. In fact, as it is well known, every prime graph contains an induced $P_4$ (Sumner \cite{sumner73} for finite graphs and Kelly \cite{kelly85} for infinite graphs). It follows that no $P_4$-free graph on at least 3 vertices is prime. %Moreover, the finite graphs not containing any induced $P_4$ are the cographs.

%The following 5 claims will shed some light on $P_4$-free graphs. Claim 5 is nothing more than Theorem \ref{thm:1}.\\
%Let $G=(X,E)$ be a connected $P_4$-free graph of any cardinality. For an $x\in X$ we denote by $N(x):=\{y\in X : \{x,y\}\in E\}$ and set $inc(x):=X\setminus (N(x)\cup \{x\})$. We prove several claims.\\
\begin{lemma}\label{lem:1}Let $G=(X,E)$ be a connected $P_4$-free graph of any cardinality and $x\in X$.  Every connected component of $\inc(x)$ is a module in $X$.
\end{lemma}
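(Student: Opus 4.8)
The plan is to fix a connected component $C$ of the induced subgraph $G_{\restriction inc(x)}$ and verify the module condition directly: for every vertex $v\in X\setminus C$, either $v$ is adjacent to every element of $C$ or to no element of $C$. I would split the verification according to where $v$ lies, namely $v=x$, $v\in inc(x)\setminus C$, or $v\in N(x)$.

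The first two cases are immediate. If $v=x$, then since $C\subseteq inc(x)$ and $x$ is by definition adjacent to no vertex of $inc(x)$, the vertex $x$ is adjacent to no element of $C$. If $v\in inc(x)\setminus C$, then $v$ lies in a connected component of $inc(x)$ different from $C$; as adjacency in $G$ between two vertices of $inc(x)$ coincides with adjacency in $G_{\restriction inc(x)}$, there can be no edge joining $v$ to $C$, so again $v$ is adjacent to no element of $C$.

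The substantive case is $v\in N(x)$, and this is where I expect the only real work to lie. Arguing by contradiction, suppose $v$ is adjacent to some element of $C$ but not to all of them. Then $C$ is partitioned into the nonempty set of neighbours of $v$ and the nonempty set of non-neighbours of $v$; since $C$ is connected, some edge $\{a,b\}$ of $C$ crosses this partition, say $\{v,a\}\in E$ and $\{v,b\}\notin E$. I would then examine the four vertices $x,v,a,b$, which are pairwise distinct because $N(x)$, $inc(x)$ and $\{x\}$ are mutually disjoint and $\{a,b\}$ is an edge. Recording the six pairs: $\{x,v\}\in E$ (as $v\in N(x)$), $\{x,a\},\{x,b\}\notin E$ (as $a,b\in inc(x)$), $\{a,b\}\in E$ (an edge of $C$), and finally $\{v,a\}\in E$, $\{v,b\}\notin E$ (by the choice of $a,b$). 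These are exactly the adjacencies of the path $x\,v\,a\,b$, so $G_{\restriction\{x,v,a,b\}}$ is an induced $P_4$, contradicting the hypothesis that $G$ is $P_4$-free.

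Having ruled out the mixed case, I conclude that every $v\in X\setminus C$ is adjacent to all of $C$ or to none of $C$, which is precisely the statement that $C$ is a module in $G$. The crux of the argument is the passage from a vertex seeing $C$ inconsistently to a single crossing edge inside the connected set $C$, after which the forbidden $P_4$ appears automatically; the remaining bookkeeping (distinctness of the four vertices and the tally of edges and non-edges) is routine.
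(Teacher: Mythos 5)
Your argument is correct and follows essentially the same route as the paper's proof: locate a vertex that sees the component $C$ inconsistently, observe it must lie in $N(x)$, use connectedness of $C$ to find a crossing edge $\{a,b\}$, and exhibit the induced $P_4$ on $\{x,v,a,b\}$. Your write-up is only slightly more explicit than the paper's in discharging the cases $v=x$ and $v\in inc(x)\setminus C$ and in tallying all six adjacency pairs, which the paper leaves to the reader.
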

\begin{proof}Let $C$ be a connected component of $\inc(x)$ and suppose for a contradiction that it is not a module in $G$. Then $C$ has at least two elements and there exists $y\not \in C$ such that $y$ is adjacent to some vertex of $C$ and not adjacent to another of vertex of $C$. Clearly we must have $y\in N(x)$. Now $y$ induces a partition of the vertices of $C$ into two subsets: the set $C_1$ of the vertices adjacent to $y$ and the set $C_2$ of those vertices not adjacent to $y$. Since $C$ is connected there are vertices $a\in C_1$ and $b\in C_2$ which are adjacent. But then $\{x,y,a,b\}$ is a $P_4$ in $G$ contradicting our assumption. The proof of the lemma is now complete.
\end{proof}

\begin{lemma}\label{lem:2}Let $G=(X,E)$ be a connected $P_4$-free graph of any cardinality and $x\in X$. Every connected component of $\inc(x)$ induces a partition of $N(x)$ into two subsets $N_1(x)$ and $N_2(x)$ such that every element of $N_1(x)$ is adjacent to every element of $N_2(x)$.
\end{lemma}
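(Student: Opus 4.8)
The plan is to fix a connected component $C$ of $G_{\restriction inc(x)}$ and build the desired partition directly from the module structure granted by Lemma \ref{lem:1}. Since $C$ is a connected component of $inc(x)$ it is nonempty, and by Lemma \ref{lem:1} it is a module in $G$. Every vertex $y\in N(x)$ lies outside $C$ (because $N(x)$ and $inc(x)$ are disjoint), so the module property forces $y$ to be adjacent either to all vertices of $C$ or to none of them. I would therefore set $N_1(x)$ to be the set of vertices of $N(x)$ adjacent to every vertex of $C$, and $N_2(x)$ to be the set of vertices of $N(x)$ adjacent to no vertex of $C$. These two sets are disjoint (a vertex cannot be simultaneously adjacent to all and to none of the nonempty set $C$) and their union is all of $N(x)$, so they do give a partition.

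It remains to verify the adjacency condition, which is where the $P_4$-freeness enters. I would argue by contradiction: suppose $y_1\in N_1(x)$ and $y_2\in N_2(x)$ are not adjacent, and fix any $c\in C$. Then among the four vertices $c,y_1,x,y_2$ one has the edges $\{c,y_1\}$ (since $y_1$ is adjacent to all of $C$), $\{y_1,x\}$ and $\{x,y_2\}$ (since $y_1,y_2\in N(x)$), while $\{c,x\}$ is a non-edge (as $c\in inc(x)$), $\{c,y_2\}$ is a non-edge (since $y_2$ is adjacent to no vertex of $C$), and $\{y_1,y_2\}$ is a non-edge by assumption. These are exactly the adjacencies of the path $c\,y_1\,x\,y_2$, so $G_{\restriction \{c,y_1,x,y_2\}}$ is an induced $P_4$, contradicting that $G$ is $P_4$-free. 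Hence every element of $N_1(x)$ is adjacent to every element of $N_2(x)$.

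I do not expect a genuine obstacle here: the entire content is that Lemma \ref{lem:1} converts the statement ``the component $C$ is a module'' into the clean dichotomy defining $N_1(x)$ and $N_2(x)$, and then a single four-vertex configuration rules out the bad case. The only points requiring care are that the partition depends on the chosen component $C$ (the notation $N_1(x),N_2(x)$ suppresses this dependence), and that $C$ must be nonempty so that ``adjacent to all of $C$'' and ``adjacent to none of $C$'' are genuinely mutually exclusive, which is guaranteed because $C$ is a connected component.
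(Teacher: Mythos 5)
Your proof is correct and follows essentially the same route as the paper: both define $N_1(x)$ and $N_2(x)$ as the vertices of $N(x)$ adjacent to all, respectively none, of the vertices of the component $C$ (using Lemma \ref{lem:1} to see this is a partition), and both rule out a non-adjacent pair $y_1\in N_1(x)$, $y_2\in N_2(x)$ by exhibiting the induced $P_4$ on $\{c,y_1,x,y_2\}$. Your version simply spells out the edge and non-edge checks more explicitly than the paper does.
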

\begin{proof}Let $C$ be a connected component of $\inc(x)$. Set $N_1(x):= \{y\in N(x) : \mbox{ for all } c\in C, \{y,c\}\in E\}$ and $N_2(x):= \{y\in N(x) : \mbox{ for all } c \in C, \{y,c\}\not \in E\}$. Clearly, $\{N_1(x),N_2(x)\}$ is a partition of $N(x)$ (follows from Lemma \ref{lem:1}). Now suppose for a contradiction that there are vertices $y_1\in N_1(x)$ and $y_2\in N_2(x)$ such that $\{y_1,y_2\}\not \in E$. Then $\{c,y_1,x,y_2\}$ is a $P_4$ in $G$ where $c\in C$. A contradiction. The proof of the lemma is now complete.
\end{proof}
%Now we define
%\[N_x:=\{y\in N(x) : y \mbox{ is adjacent to all elements of } inc(x)\}.\]
%This is our generalisation of Theorem \ref{thm:1}.\\

We now proceed to the proof of Theorem \ref{thm:2}.

\begin{proof}$\Rightarrow$ Suppose $N_x\neq \emptyset$ and set $N'_x:= X\setminus N_x$ and note that $x\in N'_x$. We prove that every vertex of $N_x$ is adjacent to every vertex of $N'_x$. All we have to do is to prove that every vertex of $N_x$ is adjacent to every vertex of $N(x)\setminus N_x$. So lets assume for a contradiction that there exist $y_1\in N_x$ and $y_2\in N(x)\setminus N_x$ such that $\{y_1,y_2\}\not \in E$. Since $y_2\not \in N_x$ there must be a connected component $C$ of $\inc(x)$ and $c\in C$ such that $\{y_2,c\}\not \in E$. But then $\{y_2,x,y_1,c\}$ is a $P_4$ in $G$ which is impossible.\\
$\Leftarrow$ Now suppose that ${\overline G}$ is not connected. There exists then a partition $\{X_1,X_2\}$ of $X$ such that every vertex of $X_1$ is adjacent to every vertex of $X_2$ and let $x\in X_1$. It follows then that $\inc(x)\in X_1$. Hence, there exists $y\in X_2$, and therefore in $N(x)$, which is adjacent to all vertices of $\inc(x)$, that is, $N_x\neq \emptyset$. The proof of the theorem is now complete.
\end{proof}

We now proceed to the proof of Corollary \ref{cor:1}.

\begin{proof}We prove that $N_x\neq \emptyset$ holds in the case where $G_{\restriction \inc(x)}$ has finitely many connected components. According to Lemma \ref{lem:1}, every connected component of $\inc(x)$ induces a partition of $N(x)$ into two subsets $N_1(x)$ and $N_2(x)$ such that every element of $N_1(x)$ is adjacent to every element of $N_2(x)$ and every vertex of $N_1(x)$ is adjacent to all vertices of $\inc(x)$. Among all connected components of $\inc(x)$ choose one, say $C$, that has the smallest (with respect to set inclusion) $N_1(x)$. We now prove that every vertex of $N_1(x)$ is adjacent to every vertex not in $N_1(x)$. It is enough to prove that every vertex in $N_1(x)$ is adjacent to every vertex in every connected component of $\inc(x)$. Suppose not and let $C'$ be a connected component of $\inc(x)$ and let $c'\in C'$ such that $\{y_1,c'\}\not \in E$ for some $y_1\in N_1(x)$. Let $y_2\in N(x)\setminus N_1(x)$. Then we must have $\{y_2,c'\}\in E$ because otherwise the partition induced by $C'$ on $N(x)$ generates a smaller $N_1(x)$ than that of $C$ contradicting our choice of the connected component $C$. Note that $\{y_2,y_1\}\in E$. But then $\{c',y_2,y_1,c\}$ is a $P_4$ for every $c\in C$ which is impossible. The proof of corollary is now complete.
\end{proof}

\section{A proof of Theorems \ref{thm:completesp} and \ref{thm:finite-antichain}}\label{section:proof-thms}

A subset $A$ of $X$ is called a {\it module} in a poset $P=(X,\leq)$ if for all $v\not\in A$ and for all $a,a^{\prime}\in A$
\begin{equation}
(v<a\Rightarrow v < a^{\prime})\;\mathrm{and}\;(a<v\Rightarrow a^{\prime} < v).
\end{equation}
The empty set, the singletons and the whole set $X$ are modules and are said to be {\it trivial}. The notion of a module for general relational structures was introduced by Fra\"{\i}ss\'{e} \cite{fraisse} who used the term "interval" rather than module. A poset is \emph{prime} if all its modules are trivial. We recall the following result (see \cite{kelly85}). %Here are some properties of modules in a poset.

\begin{theorem}\label{kelly} A poset $P$ is prime if and only if $\comp(P)$ is prime.
\end{theorem}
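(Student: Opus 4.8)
The plan is to compare the modules of $P$ directly with those of $G:=Comp(P)$, resting on a single \emph{characterisation}: a set $A\subseteq X$ is a module of $P$ if and only if $A$ is a module of $G$ and no vertex $v\in X\setminus A$ \emph{straddles} $A$, meaning there are no $a,a'\in A$ with $a<v<a'$. One implication is immediate. If $A$ is a module of $P$ and $v\notin A$ is comparable to some $a\in A$, say $v<a$, then the poset-module condition forces $v<a'$ for every $a'\in A$ (and dually when $a<v$), so $v$ is comparable to all of $A$; hence $A$ is a module of $G$, and a straddler $a<v<a'$ would directly violate the module condition. Conversely, if $A$ is a module of $G$ with no straddler and $v<a$ for some $a\in A$, then $v$ is comparable to every $a'\in A$, and $a'<v$ is impossible (it would make $a'<v<a$ a straddle), so $v<a'$. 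This characterisation settles the easy direction at once: every module of $P$ is a module of $G$, so if $G$ is prime then $P$ is prime.

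For the converse I would argue by contraposition: assuming $G$ has a nontrivial module $M$, I will produce a nontrivial module of $P$. If $M$ has no straddler it is already a module of $P$ and we are done, so suppose some $b\in X\setminus M$ straddles $M$. Writing $D:=\{a\in M: a<b\}$ and $U:=\{a\in M: a>b\}$, transitivity gives that every element of $D$ lies below every element of $U$, with $b$ strictly between them and $D,U$ both nonempty. The engine of the proof is the \emph{order-convex interval}
\[ I:=\{x\in X : d<x \text{ for all } d\in D \text{ and } x<u \text{ for all } u\in U\}, \]
which contains $b$. I claim $I$ is a module of $P$. First, $I$ is order-convex, so no external vertex can straddle it. Second, if an external $w$ were comparable to some $x\in I$ but incomparable to another $x'\in I$, say $w<x$, then $w<x<u$ for $u\in U$ makes $w$ comparable to an element of $M$, hence—since $M$ is a module of $G$—comparable to all of $M$; tracing the inequalities through $D$ and $U$ then forces $w<x'$, a contradiction. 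Thus every external vertex is uniform on $I$, so $I$ is a module of $P$; since $D\neq\emptyset$ is disjoint from $I$, the module $I$ is proper, and whenever $|I|\ge 2$ we have found a nontrivial module of $P$.

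The remaining obstacle is the degenerate case $I=\{b\}$, where $b$ is the unique element strictly between $D$ and $U$. Here I would use that $D$ and $U$ are themselves modules of $G$ (a routine check from the module property of $M$ together with $D<U$): if either is nontrivial I recurse on it, while in the base case $D=\{d\}$, $U=\{u\}$ one checks directly that $\{d,b\}$ is a nontrivial module of $P$. For finite $P$ this descent terminates and completes the proof, which is the step I expect to be routine. The genuine difficulty is the \emph{infinite} case, where the descending chain of modules $M\supsetneq D\supsetneq\cdots$ need not terminate; I would address this by passing to the intersection of a maximal descending chain of modules of $G$—nested intersections of modules of $G$ are again modules of $G$—and analysing the resulting minimal configuration. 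This transfinite bookkeeping is the technical heart of the argument, matching Kelly's treatment in \cite{kelly85}.
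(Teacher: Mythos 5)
Your finite argument is essentially correct: the characterisation of modules of $P$ as straddler-free modules of $Comp(P)$ is right, the interval $I$ is a module of $P$ whenever $|I|\ge 2$ (with the small caveat that in the uniformity check the external vertex $w$ must be split into the cases $w\in M$ and $w\notin M$ --- the module property of $M$ in $G$ applies only in the second case, while in the first case $w\in D\cup U$ is uniform on $I$ by the very definition of $I$), the sets $D$ and $U$ are indeed modules of $G$, and the base case $\{d,b\}$ checks out. Be aware, though, that the paper offers no proof of this theorem --- it recalls it as known, citing Kelly \cite{kelly85} --- and that it invokes it for posets of \emph{arbitrary} cardinality (to conclude that no $N$-free poset on at least three vertices is prime). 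So the infinite case is not an optional refinement here; it is the case that matters, and it is exactly where your proof has a genuine gap. The repair you sketch --- intersect a maximal descending chain of modules of $G$ and analyse the ``minimal configuration'' --- fails: the intersection of a descending chain of nontrivial modules of $G$ is a module of $G$, but it can be empty or a singleton, hence trivial, leaving nothing to analyse. Concretely, take $P=(\ZZ,\le)$, so that $G$ is the complete graph on $\ZZ$ and \emph{every} subset is a module of $G$, and start with $M$ the set of even integers. The straddler $1$ gives $D_0=\{0,-2,-4,\dots\}$, $U_0=\{2,4,\dots\}$, $I_0=\{1\}$; recursing on $D_0$, the straddler $-1$ gives $D_1=\{-2,-4,\dots\}$, $U_1=\{0\}$, $I_1=\{-1\}$; and so on. At every stage the side you are forced to recurse on is infinite, the base case is never reached, and $\bigcap_i D_i=\emptyset$ (the same happens on the $U$ side). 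So no limit of this descent yields a usable configuration.

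The gap can be closed without any recursion by changing what you extract from $M$. Given a nontrivial module $M$ of $G$, consider the connected components of the incomparability graph of $P$ restricted to $M$. Every such component $A$ is a module of $P$: if $v\notin A$ and $v<a$ for some $a\in A$, and $a''\in A$ is incomparable to $a$, then $v$ is comparable to $a''$ (if $v\in M$, because $v$ lies in a different incomparability component of $M$; if $v\notin M$, by the module property of $M$ in $G$), and $a''<v<a$ would force $a''<a$; hence $v<a''$, and this propagates along incomparability paths through $A$, with the dual argument when $a<v$. So if some component $A$ has at least two elements, then $A\subseteq M\subsetneq X$ is a nontrivial module of $P$ and you are done. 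Otherwise $M$ is a chain of $P$ with $|M|\ge 2$; pick $a<a'$ in $M$. The order interval $[a,a']:=\{x\in X : a\le x\le a'\}$ is then a module of $P$: any outside $v$ comparable to some $x\in[a,a']$ is, by transitivity, comparable to $a$ or $a'$, hence to all of $M$, and since $v\notin[a,a']$ it must lie below $a$ or above $a'$, so it is uniform on the interval. This module is nontrivial unless $[a,a']=X$; in that exceptional case $a=\min X$ and $a'=\max X$, so $X\setminus\{a,a'\}$ is a module of $P$, which is nontrivial as soon as $|X|\ge 4$, and a three-element poset is never prime. This argument is uniform in the cardinality and yields the full theorem with no transfinite bookkeeping at all.
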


%It is well known that every prime graph contains an induced $P_4$ (Sumner \cite{sumner73} for finite graphs and Kelly \cite{kelly85} for infinite
%graphs).
It follows from Proposition \ref{prop:1} and Theorem \ref{kelly} and the fact that no $P_4$-free  graph on at least three vertices is prime that no $N$-free poset on at least 3 vertices is prime. We should mention that every module of $P$ is a module of $\comp(P)$. The converse is false.

%The following lemma is similar to Claim 1 of Section \ref{general}.
Let $P=(V,\leq)$ be a poset. For $x\in V$ we denote by $\inc(x)$ the set of all elements of $V$ that are incomparable to $x$ in $P$.

\begin{lemma}\label{lem:3}Let $P=(X,\leq)$ be a poset. Then $P$ is $N$-free if and only if for all $x\in X$ every connected component of $\inc(x)$ is a module in $P$.
\end{lemma}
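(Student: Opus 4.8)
The plan is to prove both implications directly from the definition of an $N$, using the reformulation that a subset $A$ is a module in $P$ exactly when each $v\notin A$ is either below every element of $A$, above every element of $A$, or incomparable to every element of $A$ (the two directional clauses of the module definition, together with antisymmetry, rule out any mixed behaviour). Throughout, $inc(x)$ denotes the set of elements incomparable to $x$ and connected components are taken in $Comp(P)$, so two elements of $inc(x)$ lie in a common component precisely when they are joined by a path of comparabilities staying inside $inc(x)$; in particular, two elements of $inc(x)$ in distinct components are incomparable.

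For the easier converse I would argue by contraposition. Suppose $P$ contains an $N$ on $(a,b,c,d)$ with $b>a$, $b>c$, $d>c$ and no other comparabilities, and take $x=a$. Then $c$ and $d$ are incomparable to $a$ yet comparable to each other, so they lie in a common component $C$ of $inc(a)$, while $b\notin inc(a)$ because $b>a$. Since $b>c$ but $b$ is incomparable to $d$, the element $b$ witnesses that $C$ is not a module, giving the desired contrapositive.

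For the forward implication I would assume $P$ is $N$-free, fix $x$ and a component $C$ of $inc(x)$. By connectedness of $C$ it suffices to show that every $v\notin C$ is related in the same way (below, above, or incomparable) to the two endpoints of each edge of $C$; propagating this equality of relation-type along a path then forces $v$ to be uniformly related to all of $C$, which is exactly the module condition (and singleton components are covered vacuously). So I fix an edge with endpoints $c<c'$ in $C$ and a vertex $v\notin C$. If $v=x$, or $v\in inc(x)\setminus C$, then $v$ is incomparable to both $c$ and $c'$ (in the second case because any comparability would place $v$ in $C$). The remaining case is $v\in N(x)$, where by order-duality I may assume $x<v$. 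Since $x$ is incomparable to $c$ and to $c'$, transitivity forbids $v<c$ and $v<c'$, so with respect to each of $c,c'$ the vertex $v$ is either strictly above or incomparable. The reverse split ($v>c'$, $v$ incomparable to $c$) is impossible, as $c<c'<v$ would force $c<v$; hence the only way uniformity could fail is $v>c$ together with $v$ incomparable to $c'$. But then $\{x,v,c,c'\}$ carries exactly the comparabilities $x<v$, $c<v$, $c<c'$ and no others, an induced $N$ on $(x,v,c,c')$, contradicting $N$-freeness.

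I expect the forward direction to be the main obstacle, and within it the decisive point is recognizing that the sole obstruction to uniformity across an edge is a configuration whose fourth vertex is $x$ itself, producing a genuine induced $N$; the other point requiring care is the reduction of the module condition to the edges of $C$ via its connectedness, which makes the propagation argument clean.
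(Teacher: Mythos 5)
Your proof is correct and follows essentially the same route as the paper's: the backward direction is the identical contrapositive observation that $c,d$ lie in one component of $inc(a)$ while $b$ witnesses non-modularity, and the key configuration in your forward direction --- $x$, an outside vertex $v$ comparable to exactly one endpoint of a comparable pair $c<c'$ in $C$, yielding the induced $N$ $(x,v,c,c')$ --- is precisely the $N$ $(t,z,y,x)$ the paper extracts after splitting $C$ into the parts comparable and incomparable to the witness. The only cosmetic difference is that you phrase the connectedness step as propagation of relation-type along edges of $C$, whereas the paper derives a contradiction with the connectedness of $C$.
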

\begin{proof} Assume that every connected component of $\inc(x)$ is a module and suppose for a contradiction that $(X,\leq)$ has an $N$. Then there are $a,b,c,d$ such that $(a,b,c,d)$ is an $N$. Hence, $\{c,d\}\subseteq inc(a)$ and $b>c$ but $b$ is incomparable to $d$ contradicting our assumption that the connected component of $inc(a)$ containing $\{c,d\}$ is a module in $(X,\leq)$.

Now let $x\in X$ and let $C$ be a connected component of $\inc(x)$ and suppose for a contradiction that it is not a module in $(X,\leq)$. Note that $C$ has at least two elements. There exists then $y\not \in \inc(x)$ that partitions $C$ into two nonempty subsets: the subset $C_1$ consisting of those elements of $C$ comparable to $y$ and the subset $C_2$ consisting of those elements of $C$ incomparable to $y$. Note that since $C$ is convex $y$ is either smaller than all elements of $C_1$ or is larger than all elements of $C_1$, say $y$ is lesser than all elements of $C_1$. We claim that $C_1$ and $C_2$ are connected components of $C$ contradicting the connectedness of $C$. Indeed, if not, then there exists $z\in C_1$ and $t\in C_2$ such that $t<z$. But then $(t,z,y,x)$ is an $N$ in $(X,\leq)$ contradicting our assumption.
\end{proof}

Let $P=(V,\leq)$ be a poset and $x\in V$. We define
\[C^{+}_x:=\{y : y > x \mbox{ and } y \mbox{ is comparable to all elements of } \inc(x)\},\]
and
\[C^{-}_x:=\{y : y < x \mbox{ and } y \mbox{ is comparable to all elements of } \inc(x)\}.\]

Set $C_x:=C^{-}_x\cup C^{+}_x$. %The following lemma is similar to Claim 3 of Section \ref{general}.

\begin{lemma}\label{lem:4}
Let $P=(V,\leq)$ be a connected $N$-free poset. There exists $x\in X$ such that $C_x\neq \emptyset$ if and only if $P$ is a linear sum.
\end{lemma}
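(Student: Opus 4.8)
The plan is to prove both directions by connecting the set $C_x$ to the structure of $\overline{Comp(P)}$ and then invoking the work already done for graphs. Recall from Lemma \ref{lem:3} that $N$-freeness of $P$ is equivalent to every connected component of $inc(x)$ being a module in $P$; since $P$ is connected, $Comp(P)$ is a connected $P_4$-free graph by Proposition \ref{prop:1}, so Theorem \ref{thm:2} is available. First I would establish the easy direction ($\Leftarrow$). If $P$ is a linear sum $\sum_{i\in I} P_i$ over a totally ordered index set $I$ with $|I|\ge 2$, pick any element $x$ lying in some $P_i$ with $i$ not the top index; then every $y$ in any $P_j$ with $j>i$ satisfies $y>x$, and such $y$ is comparable to everything outside its own block, in particular to all of $inc(x)$ (since $inc(x)\subseteq P_i$, as incomparabilities only occur within a single block). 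Hence $C^{+}_x\neq\emptyset$, so $C_x\neq\emptyset$.

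For the forward direction ($\Rightarrow$), the key observation is that $C_x$ is exactly the order-theoretic refinement of the graph-theoretic set $N_x$ from Theorem \ref{thm:2}. Indeed, $y\in N_x$ means $y$ is comparable to $x$ and comparable to every element of $inc(x)$; splitting according to whether $y>x$ or $y<x$ recovers $C^{+}_x$ and $C^{-}_x$. Thus $C_x\neq\emptyset$ is literally the statement $N_x\neq\emptyset$ in $Comp(P)$, which by Theorem \ref{thm:2} gives that $\overline{Comp(P)}=Inc(P)$ is disconnected. So I would argue: assuming $C_x\neq\emptyset$ for some $x$, Theorem \ref{thm:2} yields a partition $\{X_1,X_2\}$ of $X$ with every vertex of $X_1$ comparable to every vertex of $X_2$ in $P$. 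It remains to upgrade this ``mutual comparability'' into a genuine linear-sum decomposition, i.e. to show the comparabilities between $X_1$ and $X_2$ all point the same way, giving (after possibly swapping) $a<b$ for all $a\in X_1$, $b\in X_2$.

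The hard part will be exactly this last upgrade: ruling out a ``crossing'' pair, where $a_1<b_1$ but $b_2<a_2$ for some $a_1,a_2\in X_1$ and $b_1,b_2\in X_2$. This is where $N$-freeness must be used essentially, since for a general comparability graph a complete bipartite join need not be a linear sum. I would handle it by a short case analysis producing a forbidden $N$ (equivalently, using Lemma \ref{lem:3}). For instance, if the orientation is not uniform one can locate four elements realizing the pattern $b>a,\ b>c,\ d>c$ with no other comparabilities, contradicting $N$-freeness; the mutual-comparability between the blocks forces the remaining pairs to be comparable and hence forces a consistent direction. Concretely, I expect to show that the relation ``$X_1$ lies below $X_2$'' is forced by transitivity together with the absence of an $N$, and then define $I=\{1,2\}$ totally ordered by $1<2$ and conclude $P=P_{\restriction X_1}+P_{\restriction X_2}$ is a linear sum. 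A minor technical point to check is that both blocks are nonempty, which holds since $x$ and any witness in $C_x$ lie in opposite blocks.
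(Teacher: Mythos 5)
Your reduction of $C_x\neq\emptyset$ to $N_x\neq\emptyset$ in $Comp(P)$ is correct, and your backward direction is fine (it is essentially the paper's). The genuine gap is in the step you yourself flag as ``the hard part'' of the forward direction, and it is not just unfinished --- as formulated it is false. From Theorem \ref{thm:2} you get a partition $\{X_1,X_2\}$ with every element of $X_1$ comparable to every element of $X_2$, and you propose to show that all these comparabilities point the same way, ruling out a ``crossing pair'' by exhibiting an $N$. But a crossing pair need not produce an $N$: take the three-element chain $a<b<c$ with $X_1=\{b\}$ and $X_2=\{a,c\}$ (a legitimate disconnection of the incomparability graph). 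Here $a<b$ while $b<c$, the orientation between the blocks is not uniform, and there is no $N$ anywhere. So no case analysis can force $X_1<X_2$ or $X_2<X_1$ for an arbitrary such partition; the chain is of course still a linear sum, but not along the blocks Theorem \ref{thm:2} hands you. Your remark that $N$-freeness is what is needed at this stage is also misplaced: for \emph{any} poset, disconnection of $Inc(P)$ implies $P$ is a linear sum (of the connected components of $Inc(P)$), by an alternating-path argument; $N$-freeness enters only through Theorem \ref{thm:2}, i.e.\ in getting $Inc(P)$ to be disconnected in the first place.

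To close the gap you must change the decomposition. One repair within your framework: show that for any two distinct connected components $K,K'$ of $Inc(P)$, either every element of $K$ is below every element of $K'$ or vice versa (if $a<b$ and $b'<a'$ with $a,a'\in K$, $b,b'\in K'$, walk along an incomparability path from $a$ to $a'$ inside $K$ to get $a'<b$, then along a path from $b$ to $b'$ inside $K'$ to get $a'<b'$, contradicting $b'<a'$); this order on components is total and transitive, so $P$ is the linear sum over the components, which number at least two since $Inc(P)$ is disconnected. The paper instead avoids Theorem \ref{thm:2} entirely and argues directly from $C_x\neq\emptyset$: it sets $C'_x$ to be the elements comparable to $x$ but incomparable to some element of $inc(x)$, uses $N$-freeness to show $C^{-}_x < C'_x\cup inc(x)\cup\{x\} < C^{+}_x$, and reads off a three-block linear sum. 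Either route works, but your proposal as written does not yet contain a proof of the forward direction.
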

\begin{proof}Suppose $C_x\neq \emptyset$ and set
\[C'_x:=\{y : y \mbox{ is comparable to } x \mbox{ and } y \mbox{ is incomparable to some element of } \inc(x)\}.\]
Clearly $\{C_x,C'_x\}$ is a partition of the set of elements comparable to $x$. We prove that every element of $C^{-}_x$ is below every element of $C'_x$ and that every element of $C'_x$ is below every element of $C^{+}_x$. We notice at once that no element of $C'_{x}$ is above some element of $C^+_{x}$. Suppose for a contradiction that there exists $z>x$ and $z\in C'_x$ such that $z$ is incomparable to some element $y\in C^{+}_x$. Let $t\in \inc(x)$ be such that $t<y$. Then $(t,y,x,z)$ is an $N$ in $P$ contradicting our assumption that $P$ is $N$-free. This proves that every element of $C'_x$ is below every element of $C^{+}_x$. Similarly we prove that every element of $C'_x$ is above every element of $C^{+}_x$. Setting $P_1=C^{-}_x$, $P_2=C'_x\cup \inc(x)$, and $P_3=C^{+}_x$ we now have that $P$ is the linear sum of $P_1,P_2,P_3$ over the total order $1<2<3$.

Now suppose that $P$ is a linear sum, that is, $P=\displaystyle \sum_{i\in I}P_i$ and $I$ is totally ordered. Let $i\in I$ and $x\in P_{i}$. Then $\inc(x)\subseteq P_i$. Hence every element $y\in P_j$ for $j\neq i$ is necessarily in $C_x$ proving that $C_x\neq \varnothing$ as required.
\end{proof}

%\begin{lemma}\label{lem:3} $P=(V,\leq)$ be a connected $N$-free poset and $x\in V$. Let $C$ be a maximal chain containing $x$. Then some element of $C$ is comparable to some element of $inc(x)$.
%\end{lemma}
%\begin{proof}Suppose for a contradiction that $C$ is incomparable to all elements of $inc(x)$ and let $y\in inc(x)$. Let $z$ be comparable to $x$ and to $y$ (such a $z$ exists since $P$ is connected and $N$-free). We may assume without loss of generality that $x<z>y$. Then $z\not \in C$ and by the maximality of $C$ there must be an element $t\in C$ incomparable to $z$. Thus $x<t$. But then we have that $(y,z,x,t)$ is an $N$ in $P$ contradicting our assumption that $P$ is $N$-free. This completes the proof of the lemma.
%\end{proof}

We now proceed to the proof of Theorem \ref{thm:completesp}.

\begin{proof}Let $P=(V,\leq)$ be a connected $N$-free poset and let $x\in X$. Now assume $P$ to be chain complete and let $C$ be a maximal chain containing $x$ and let $\mathfrak{s}$, respectively $\mathfrak{i}$, be its supremum, respectively its infimum. Then $\mathfrak{s}$ or $\mathfrak{i}$ is comparable to all elements of $\inc(x)$. If not, then $\mathfrak{s}$ is incomparable to some element $y_\mathfrak{s}$ of $\inc(x)$ and $\mathfrak{i}$ is incomparable to some element $y_\mathfrak{i}$ of $\inc(x)$. From the connectedness of $P$ and our assumption that $P$ is $N$-free we deduce that there exists $z$ comparable to $\mathfrak{s}$ and $y_\mathfrak{s}$. Then $z\not \in \inc(x)$ because otherwise $z$ is in the connected component $C(y_\mathfrak{s})$ of $\inc(x)$ containing $y_\mathfrak{s}$ and hence $y_\mathfrak{s}$ is comparable to $\mathfrak{s}$ since $C(y_\mathfrak{s})$ is a module (see Lemma \ref{lem:3}) contradicting our assumption that $\mathfrak{s}$ is incomparable to $y_{\mathfrak{s}}$. Clearly we must have $z<x$. Similarly there exists $z'$ comparable to $\mathfrak{i}$ and $y_\mathfrak{i}$ and $x<z'$. Assume that $z'$ is incomparable to $\mathfrak{s}$. Then $y_\mathfrak{i}<\mathfrak{s}$ because otherwise $(y_\mathfrak{i},z',x,\mathfrak{s})$ is an $N$ in $P$. In particular, $y_\mathfrak{s}$ and $y_\mathfrak{i}$ are incomparable (this is because $y_\mathfrak{s}$ is incomparable to $\mathfrak{s}$ and $y_\mathfrak{i}<\mathfrak{s}$ and hence by Lemma \ref{lem:3} $y_\mathfrak{s}$ and $y_\mathfrak{i}$ are in different connected components of $\inc(x)$). Also, $z'$ is incomparable to $y_s$ because otherwise $(y_\mathfrak{s},z',y_\mathfrak{i},\mathfrak{s})$ is a $N$ in $P$. But then $z<y_{\mathfrak{i}}$ because otherwise $(y_\mathfrak{i},z',z,y_\mathfrak{s})$ is an $N$ in $P$. Then $\mathfrak{i}<z$ because otherwise $(\mathfrak{i},x,z,y_{\mathfrak{i}})$ is an $N$ in $P$ which is impossible. But then $\mathfrak{i}<y_{\mathfrak{i}}$ since $z< y_{\mathfrak{i}}$ contradicting our assumption that $\mathfrak{i}$ is incomparable to $y_{\mathfrak{i}}$. Hence our assumption that $z'$ is incomparable to $y_\mathfrak{s}$ is false, that is, $z'<\mathfrak{s}$. Similarly we prove that $z$ is comparable to $\mathfrak{i}$, that is, $\mathfrak{i}<z$. Hence $y_\mathfrak{s}$ and $y_\mathfrak{i}$ are incomparable (this is because $y_\mathfrak{s}$ and $y_\mathfrak{i}$ are in different connected components of $\inc(x)$). But then $(y_\mathfrak{i},\mathfrak{s},\mathfrak{i},y_\mathfrak{s})$ is an $N$ in $P$ and this is impossible. Therefore $\mathfrak{s}$ or $\mathfrak{i}$ is comparable to all elements of $\inc(x)$. The conclusion of the theorem follows from Lemma \ref{lem:4}. The proof of the theorem is now complete.
\end{proof}

We now proceed to the proof of Theorem \ref{thm:finite-antichain}.

\begin{proof}Let $P=(V,\leq)$ be an $N$-free poset with no infinite antichains. Suppose $P$ is connected. Then $\comp(P)$ is a connected $P_4$-free graph. Let $x\in V$. Since $P$ has no infinite antichains we infer that $\inc(x)$ has finitely many connected components in $P$. These are the connected components of $\inc(x)$ in $\comp(P)$. Hence, $\comp(P)$ verifies the conditions of Corollary \ref{cor:1}. It follows then that $\incg(P)$ is not connected. The conclusion now follows from Lemma \ref{lem:folklore}.
\end{proof}

\section{A proof of Theorem \ref{thm:aharoni-korman}}\label{sec:proof-thm:aharoni-korman}

Let $P=(V,\leq)$ be a poset with no infinite antichains. Following \cite{abraham}, we order the set $\mathcal{A}(P)$ of all nonempty (finite) antichains with reverse of set inclusion. So $(\mathcal{A}(P),\supseteq)$ is a well founded poset: there is no infinite strictly decreasing sequence of antichains. Hence, a rank function $r : \mathcal{A}(P) \rightarrow \alpha$ is defined.
\[r(B) = \sup\{r(A) :   B\subset  A\in \mathcal{A}(P)\}.\]
The length of the antichain poset is denoted $ht(\mathcal{A}(P))$.
\[ht(\mathcal{A}(P))= \sup\{r(A): A\neq \varnothing \wedge A\in \mathcal{A}(P)\} = \sup\{r(\{v\}) : v\in V\}.\]
as singletons are antichains.

So for example for $n\in \NN$, $ht(\mathcal{A}(P))=n$ if and only if $n$ is the maximal size of an antichain in $P$. The following is Lemma 1.10 of \cite{abraham}.

\begin{lemma}\label{lem:rank-inc}Let $P=(V,\leq)$ be a poset with no infinite antichains and $v\in V$. If $P$ is not an antichain, then $ht(inc(v))<ht(\mathcal{A}(P))$.
\end{lemma}
%\begin{proof}It follows from Lemma 1.10 of \cite{abraham} that $r(\{v\})=ht(inc(v))$. Since $r(\{v\})<ht(\mathcal{A}(P))$ the required conclusion follows.
%\end{proof}

The decomposition of the incomparability graph of a poset into connected components is expressed in the following lemma which belongs
to the folklore of the theory of ordered sets.

\begin{lemma}\label{lem:folklore} If $P:= (V, \leq)$ is a poset, the order on $P$ induces a total order on the set $Connect(P)$
of connected components of $\incg(P)$ and $P$ is the lexicographical sum of these components indexed by the chain $Connect(P)$. In
particular, if $\preceq$ is a total order extending the order $\leq$ of $P$, each connected component $A$ of $\incg(P)$ is an interval of
the chain $(V, \preceq)$.
\end{lemma}

The proof of the following lemma is easy and is left to the reader.

\begin{lemma}\label{lem:incomp-not-con}Let $P=\sum_{i\in I}P_i$ be a linear sum. Let $\mathcal{A}_i$ be a partition of $P_i$ into antichains and let $\mathcal{C}_i$ be a chain of $P_i$ so that $\mathcal{C}_i$ meets each part of $\mathcal{A}_i$. Then $\bigcup_{i\in I}\mathcal{A}_i$ is a partition of $P$ into antichains and $\sum_{i\in I}\mathcal{C}_i$ is a chain of $P$ that meets every part of $\bigcup_{i\in I}\mathcal{A}_i$.
\end{lemma}

It follows easily from Lemmas \ref{lem:folklore} and \ref{lem:incomp-not-con} that if the Aharoni--Korman Conjecture is true for posets with no infinite antichains and whose incomparability graphs are connected, then it is true for all posets with no infinite antichains.

We now proceed to the proof of Theorem \ref{thm:aharoni-korman}.
\begin{proof}
Let $P$ be an $N$-free poset with no infinite antichains. The proof is by induction on the antichain rank of $P$. It follows from Theorem \ref{thm:finite-antichain} that $P$ is either a linear sum or a disjoint sum of $N$-free posets with no infinite antichains. It follows from Lemmas \ref{lem:folklore} and \ref{lem:incomp-not-con} that we can assume $P$ to be a disjoint sum of $N$-free posets with no infinite antichains. Since $P$ has no infinite antichains we infer that $P$ has finitely many connected components $P_1,\ldots,P_n$. Now each $P_i$ is a subset of $\inc(x)$ for every $x\in P_j$ and $j\neq i$. Hence, the antichain rank of every connected component is smaller than that of $P$ and hence we can apply the induction hypothesis. For $1\leq i\leq n$, let $\mathcal{A}_i$ be a partition of $P_i$ into antichains and let $\mathcal{C}_i$ be a chain of $P_i$ that intersects every member of the partition $\mathcal{A}_i$. Among all chains $\mathcal{C}_i$ choose one of maximum cardinality, say $\mathcal{C}_1$ has maximum cardinality $\lambda_1$, and we may assume without loss of generality that if $\lambda_i=|\mathcal{C}_i|$, then $\lambda_1\geq \lambda_2 \cdots \geq \lambda_n$. We view each $\lambda_i$ as an enumeration of $\mathcal{C}_i$ for $1\leq i\leq n$. For $2\leq i\leq n$, we let $f_i$ be a one-to-one map of $\lambda_i$ into $\lambda_1$. Hence, each $f_i$ maps an element of $C_i$ into an element of $C_1$. For $2\leq i\leq n$ and for all $c\in \mathcal{C}_1$, we let $A_{f_i^{-1}(c)}$ be the unique antichain in $\mathcal{A}_i$ containing $f_i^{-1}(c)$. We let $A_{f_1^{-1}(c)}$ be the unique antichain in $\mathcal{A}_1$ containing $c$. For each $c\in \mathcal{C}_1$ set $A'_c:=\bigcup_{1\leq i\leq n} A_{f_i^{-1}(c)}$. Set $\mathcal{A}':=\bigcup_{c\in C_1}A'_c$. Then clearly $\mathcal{A}'$ is a partition of $P$ into antichains and $C_1$ meets each part of $\mathcal{A}'$.
\end{proof}

\end{document}